\newcommand{\1}{ \mathds{1}}
\newcommand{\Spec}{\mathrm{Spec}\ }
\newcommand{\maru}[1]{{\ooalign{\hfil#1\/\hfil\crcr
\raise.167ex\hbox{\mathhexbox20D}}}}
\newcommand{\ruby}[2]{%
 \leavevmode
 \setbox0=\hbox{#1}%
 \setbox1=\hbox{\tiny #2}%
 \ifdim\wd0>\wd1 \dimen0=\wd0 \end{lemma}se \dimen0=\wd1 \fi
 \hbox{%
   \kanjiskip=0pt plus 2fil
   \xkanjiskip=0pt plus 2fil
   \vbox{%
     \hbox to \dimen0{%
       \tiny \hfil#2\hfil}%
     \nointerlineskip
     \hbox to \dimen0{\mathstrut\hfil#1\hfil}}}}
\newcommand{\Z}{\mathbb{Z}}
\newcommand{\C}{\mathbb{C}}
\newcommand{\R}{\mathbb{R}}
\newcommand{\Aut}{\mathrm{Aut}\,}
\makeatletter \@addtoreset{equation}{section}
\theoremstyle{plain}
\newtheorem{theorem}{Theorem}[section]
\newtheorem{proposition}[theorem]{Proposition}
\newtheorem{lemma}[theorem]{Lemma}
\theoremstyle{definition}
\newtheorem{definition}[theorem]{Definition}
\theoremstyle{remark}
\newtheorem{remark}[theorem]{Remark}
\numberwithin{equation}{section}
\title[Holomorphic vertex operator algebra of type $A_{6,7}$]{A holomorphic vertex operator algebra of central charge $24$ whose weight one Lie algebra has type $A_{6,7}$}
 \subjclass[2010]{Primary  17B69}
 \keywords{Holomorphic vertex operator algebra, Orbifold construction, Leech lattice}
\author{Ching Hung Lam} %
  \address[C. H. Lam] {Institute of Mathematics, Academia Sinica, Taipei 10617, Taiwan and National Center for Theoretical Sciences of  Taiwan.}
  \email{chlam@math.sinica.edu.tw}
\author[H. Shimakura]{Hiroki Shimakura}%
\address[H. Shimakura]{Graduate School of Information Sciences,
Tohoku University,
Sendai 980-8579, Japan }%
\email {shimakura@m.tohoku.ac.jp}%
\date{}
\thanks{C.\,H. Lam was partially supported by MoST grant 104-2115-M-001-004-MY3 of Taiwan}
\thanks{H.\ Shimakura was partially supported by JSPS KAKENHI Grant Numbers 26800001.}
\thanks{C.\,H. Lam and H.\ Shimakura were partially supported by JSPS Program for Advancing Strategic International Networks to Accelerate the Circulation of Talented Researchers ``Development of Concentrated Mathematical Center Linking to Wisdom of the Next Generation".}
\newcommand{\sfr}[2]{\leavevmode\kern-.1em
  \raise.5ex\hbox{\the\scriptfont0 #1}\kern-.1em
  /\kern-.15em\lower.25ex\hbox{\the\scriptfont0 #2}}
\begin{document}

\begin{abstract}
In this article, we describe a construction of  a holomorphic vertex operator algebras of central charge $24$ whose weight one Lie algebra has type $A_{6,7}$. 
\end{abstract}
\maketitle


\section{Introduction}
In 1993, Schellekens \cite{Sc93} obtained a list of $71$ possible Lie algebra structures for the weight one subspaces of holomorphic vertex operator algebras (VOAs) of central charge $24$. However, only $39$ of the $71$ cases in his list had been constructed explicitly at that time. 
In the recent years, many new holomorphic VOAs of central charge $24$ have been constructed. In \cite{Lam,LS}, $17$ holomorphic VOAs were constructed using the theory of framed VOAs. In addition, three holomorphic VOAs were constructed in \cite{Mi3,SS}
using $\Z_3$-orbifold constructions associated to  lattice VOAs.
Recently, van Ekeren, M\"oller and Scheithauer \cite{EMS} have established the general $\Z_n$-orbifold construction for elements of  arbitrary orders. In particular,  constructions of five holomorphic VOAs were discussed. In \cite{LS3}, five other holomorphic VOAs were constructed using $\Z_2$-orbifold constructions associated to inner automorphisms. Based on these results, there are two remaining cases in Schellekens' list which have not been discussed yet. The corresponding Lie algebras have type $A_{6,7}$ and $F_{4,6}A_{2,2}$.

In this article, we will describe a construction of a holomorphic VOA of central charge $24$ whose weight one Lie algebra has type $A_{6,7}$.  
Since the level of  $A_{6,7}$ is $7$, it is natural to hope that a $\Z_7$-orbifold construction will bear fruit.  It is indeed correct and we will show that the desired VOA can be constructed by applying the $\Z_7$-orbifold construction to the Leech lattice VOA $V_\Lambda$ and an order $7$ automorphism of $V_\Lambda$; however, the choice of the automorphism is somewhat tricky. It is the product of (a lift of) an order $7$ isometry of $\Lambda$ and an order $7$ inner automorphism of $V_\Lambda$.
Combining the explicit construction of the twisted $V_\Lambda$-modules for an isometry of $\Lambda$ in \cite{DL} and the modification by Li's $\Delta$-operator in \cite{Li}, we obtain the irreducible twisted $V_\Lambda$-modules for the product of these two order $7$ automorphisms.
By using some explicit information, we will then show that  the weight one subspace of the resulting orbifold VOA has dimension $48$ and it is a Lie algebra of type $A_6$.

Let us explain the two automorphisms in more detail.
It is known (see \cite{Atlas}) that the Leech lattice has  exactly two conjugacy classes of isometries of order $7$. One class acts fixed-point-freely on $\Lambda$ and the other class acts on $\Lambda$ with trace $3$. By the explicit construction given in \cite{DL}, for an element in the former (resp. latter) class, 
the irreducible twisted $V_\Lambda$-module has lowest $L(0)$-weight $8/7$ (resp. $6/7$) and the resulting orbifold VOA will have a trivial (resp. $24$-dimensional) weight one subspace.
In either case, the weight one subspace is too small. 
The main trick for our construction is to modify the irreducible $\tau$-twisted $V_\Lambda$-module associated to an order $7$ isometry $\tau$ with trace $3$ by Li's $\Delta$-operator (cf. Proposition \ref{Prop:twist}) so that the resulting irreducible twisted $V_\Lambda$-module has lowest $L(0)$-weight $1$. It is equivalent to find a vector $f\in (1/7)\Lambda$ such that $f$ is fixed by $\tau$ and $(f|f)=2/7$ (see \eqref{def_f}). 
Then, if we set $g=\sigma_f\cdot\tau$, where $\sigma_f$ is the inner automorphism associated to $f$, then $g$ is the desired order $7$ automorphism of $V_\Lambda$ and the modified module is an irreducible $g$-twisted $V_\Lambda$-module. 

Our argument is, in fact, very similar to that in \cite[Section 10]{LS3}, in which a construction of a holomorphic VOA of central charge $24$ such that the corresponding weight one Lie algebra has type $A_{4,5}^2$ is discussed.   

The organization of this article is as follows. In Section 2, we recall some basic facts about VOAs and review Li's $\Delta$-operator. In Section 3, we review some basic facts about the isometry  group of the Leech lattice $\Lambda$ and describe an order $7$ isometry of $\Lambda$.
In addition, we find a suitable vector $f\in\Lambda$ and prove some lemmas about lattices.
In Section 4,  we discuss  a construction of a holomorphic vertex operator algebra of central charge $24$ such that the weight one Lie algebra has type $A_{6,7}$.  

\section{Preliminary}
In this section, we will review some fundamental results about VOAs.
We adopt the terminology and notation used in \cite{LS3}.

Let $V$ be a VOA of CFT-type.
Then, the weight one space $V_1$ has a Lie algebra structure via the $0$-th mode, which we often call the {\it weight one Lie algebra} of $V$.
 Moreover, the $n$-th modes
$v_{(n)}$, $v\in V_1$, $n\in\Z$, define  an affine representation of the Lie algebra $V_1$ on $V$.
For a simple Lie subalgebra $\mathfrak{a}$ of $V_1$, the {\it level} of $\mathfrak{a}$ is defined to be the scalar by which the canonical central element acts on $V$ as the affine representation.
When the type of the root system of $\mathfrak{a}$ is $X_n$ and the level of $\mathfrak{a}$ is $k$, we denote the type of $\mathfrak{a}$ by $X_{n,k}$.

\begin{proposition} [{\cite[(1.1), Theorem 3 and Proposition 4.1]{DMb}}]\label{Prop:V1} Let $V$ be a strongly regular, holomorphic VOA of central charge $24$.
If the Lie algebra $V_1$ is neither $\{0\}$ nor abelian, then $V_1$ is semisimple, and the conformal vectors of $V$ and the subVOA generated by $V_1$ are the same.
In addition, for any simple ideal of $V_1$ at level $k$, the identity$$\frac{h^\vee}{k}=\frac{\dim V_1-24}{24}$$
holds, where $h^\vee$ is the dual Coxeter number.
\end{proposition}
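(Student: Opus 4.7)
The statement splits naturally into three assertions: (a) semisimplicity of $V_1$; (b) coincidence of the conformal vectors of $V$ and of the subVOA $\langle V_1\rangle$ generated by $V_1$; and (c) the uniform ratio formula. I would address them in this order.

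For (a), strong regularity and the CFT-type hypothesis ensure that the invariant bilinear form on $V$ (normalized via $a_{(1)}b = \langle a,b\rangle \mathbf{1}$ on $V_1$) is nondegenerate on $V_1$; this gives reductivity, so $V_1 = \mathfrak{s}\oplus \mathfrak{a}$ with $\mathfrak{s}$ semisimple and $\mathfrak{a}$ its centre. To exclude $\mathfrak{a}\neq 0$ under the standing hypothesis $\mathfrak{s}\neq 0$, I would exploit that $\mathfrak{a}$ generates a Heisenberg subVOA $M_{\mathfrak{a}}$, and since $V$ is holomorphic, $V$ decomposes as a direct sum of Fock modules $M_{\mathfrak{a}}(\alpha)$ with multiplicities indexed by a positive-definite even lattice $L \subset \mathfrak{a}^*$. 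The weight-one subspace therefore splits as
\[
V_1 \;=\; \mathfrak{a}\oplus \bigoplus_{\alpha\in L,\,(\alpha,\alpha)=2} V^{(\alpha)},
\]
on which $\mathrm{ad}(h)$ acts by the scalar $\alpha(h)$ for $h\in\mathfrak{a}$. Since $\mathfrak{s}$ centralizes $\mathfrak{a}$, it must lie in the $\alpha=0$ part, i.e., in $\mathfrak{a}$; but $\mathfrak{s}$ is semisimple and $\mathfrak{a}$ abelian, so $\mathfrak{s}=0$, contradicting the hypothesis. Hence $\mathfrak{a}=0$ and $V_1$ is semisimple.

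For (b), I would apply the Sugawara construction to each simple ideal $\mathfrak{g}_i \subseteq V_1$ at level $k_i$, obtaining a Virasoro vector $\omega_i$ of central charge $c_i = k_i \dim\mathfrak{g}_i/(k_i + h^\vee_i)$. Since distinct $\mathfrak{g}_i$ mutually commute, so do the $\omega_i$, and $\omega' := \sum_i \omega_i$ is a Virasoro vector of central charge $c' = \sum_i c_i$; $\tilde\omega := \omega - \omega'$ is then a Virasoro vector commuting with $\langle V_1\rangle$. By construction $\tilde\omega_{(1)}$ acts as $0$ on $V_1$, so the coset subVOA $U$ generated by $\tilde\omega$ has $U_1 = 0$; standard coset theory together with holomorphicity of $V$ then forces $U$ to be trivial, giving $\tilde\omega=0$ and $\omega=\omega'$. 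In particular $\sum_i c_i = 24$.

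For (c), the essential point is the uniformity $\lambda := h^\vee_i/k_i$ independent of $i$; granted this, $\sum_i c_i = \dim V_1/(1+\lambda) = 24$ yields $\lambda = (\dim V_1-24)/24$. I would derive uniformity from Zhu's modular invariance theorem applied to trace functions $\tr_V o(a_i)o(b_i)\,q^{L_0-1}$ with $a_i,b_i$ ranging over each $\mathfrak{g}_i$; comparing the $SL_2(\Z)$-transformation behaviour of these Casimir-weighted characters against the factorization of the character of $V$ into affine-Lie-algebra characters forces all the ratios $h^\vee_i/k_i$ to agree. I expect the main obstacle of the whole argument to be this modular-invariance step: it cannot be replaced by Sugawara-level manipulations alone and genuinely requires the holomorphicity of $V$ together with the central charge being $24$.
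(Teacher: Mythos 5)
First, a structural remark: the paper does not prove this proposition at all --- it is imported verbatim from Dong--Mason \cite[(1.1), Theorem 3, Proposition 4.1]{DMb} --- so there is no in-paper argument to compare against, and your sketch has to be measured against the actual Dong--Mason proofs. Your overall architecture (reductivity via the invariant form, Sugawara plus a commutant argument for the conformal vector, Zhu modular invariance for the ratio) is the right one and matches theirs in outline; in particular your expectation that step (c) genuinely needs modular invariance is correct.

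There is, however, a genuine gap in your step (a). You decompose $V$ into Fock modules for the Heisenberg subVOA generated by the centre $\mathfrak{a}$ and assert that the $\alpha=0$ component of $V_1$ is exactly $\mathfrak{a}$, so that $\mathfrak{s}\subset\mathfrak{a}$. This is circular: the degree-one part of the $\alpha=0$ Fock component is $\mathfrak{a}\oplus C_1$, where $C=\mathrm{Com}_V(M_{\mathfrak{a}})$, and the semisimple part $\mathfrak{s}$ sits precisely inside $C_1$ (for $a\in\mathfrak{a}$ and $s\in\mathfrak{s}=[\mathfrak{s},\mathfrak{s}]$ one has $a_{(0)}s=0$ by centrality and $a_{(1)}s=\langle a|s\rangle\1=0$ by invariance of the form). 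Since $\mathfrak{a}$ is central, $\mathrm{ad}(h)$ vanishes on all of $V_1$ for $h\in\mathfrak{a}$, so the eigenspace decomposition under $\mathrm{ad}(\mathfrak{a})$ detects nothing and no contradiction is produced. In Dong--Mason the centre is in effect killed by the same weight-two trace-function identity you invoke in (c): comparing constant terms forces $\tr_{V_1}(h_{(0)})^2$ to equal a fixed positive multiple of $\langle h|h\rangle(\dim V_1-24)$ for every $h\in V_1$; taking $h\in\mathfrak{a}$ with $\langle h|h\rangle\neq0$ (possible since the form is nondegenerate on $\mathfrak{a}$) makes the left side zero, forcing $\dim V_1=24$, and then the ratio formula applied to any simple ideal would give $h^\vee/k=0$, which is absurd. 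So (a) is not independent of (c) but a consequence of it. Two further soft spots: a nondegenerate invariant form on an abstract Lie algebra does not by itself imply reductivity (non-reductive quadratic Lie algebras exist), so the reductivity step also needs the $V_1$-module structure of $V$ as in Dong--Mason; and in (b) the claim that ``coset theory plus holomorphicity forces $U$ trivial'' is exactly the content of \cite[Proposition 4.1]{DMb} rather than a standard fact --- one must first establish $\sum_i c_i=24$ (again by the character/modular argument) before one can conclude $\tilde\omega=0$.
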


Let $V$ be a VOA of CFT-type.
Let $h\in V_1$ such that $h_{(0)}$ acts semisimply on $V$. We also assume that  
$ \Spec h_{(0)}\subset (1/T) \Z$ for some positive integer $T$, 
where $\Spec h_{(0)}$ denotes 
the set of spectra of $h_{(0)}$ on $V$. 
Let $\sigma_h=\exp(-2\pi\sqrt{-1}h_{(0)})$ be the (inner) automorphism of $V$ associated to $h$.
Then by the assumption on $\Spec h_{(0)}$, we have $\sigma_h^T=1$.
Let $\Delta(h,z)$ be Li's $\Delta$-operator defined in \cite{Li}, i.e.,
\[
\Delta(h, z) = z^{h_{(0)}} \exp\left( \sum_{n=1}^\infty \frac{h_{(n)}}{-n} (-z)^{-n}\right).
\]
 
\begin{proposition}[{\cite[Proposition 5.4]{Li}}]\label{Prop:twist}
Let $\sigma$ be an automorphism of $V$ of finite order and
let $h\in V_1$ be as above such that $\sigma(h) = h$.
Let $(M, Y_M)$ be a $\sigma$-twisted $V$-module and
define $(M^{(h)}, Y_{M^{(h)}}(\cdot, z)) $ as follows:
\[
\begin{split}
& M^{(h)} =M \quad \text{ as a vector space;}\\
& Y_{M^{(h)}} (a, z) = Y_M(\Delta(h, z)a, z)\quad \text{ for any } a\in V.
\end{split}
\]
Then $(M^{(h)}, Y_{M^{(h)}}(\cdot, z))$ is a
$\sigma_h\sigma$-twisted $V$-module.
Furthermore, if $M$ is irreducible, then so is $M^{(h)}$.
\end{proposition}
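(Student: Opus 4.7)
The strategy is to verify directly that $(M^{(h)}, Y_{M^{(h)}})$ satisfies the axioms of a $\sigma_h\sigma$-twisted $V$-module --- the vacuum axiom, the truncation condition, the monodromy/grading condition with respect to $\sigma_h\sigma$, the twisted Jacobi identity, and the $L(-1)$-derivative property --- and then to observe that irreducibility is preserved under the construction.

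Three of the axioms are essentially bookkeeping. Since $h_{(n)}\vac=0$ for every $n\geq 0$, one has $\Delta(h,z)\vac=\vac$, whence $Y_{M^{(h)}}(\vac,z)=Y_M(\vac,z)=\id_M$. Truncation of $Y_{M^{(h)}}(a,z)u$ in $z$ from below follows termwise from that of $Y_M$, once one observes that within each weight space the exponential factor in $\Delta(h,z)$ truncates to a polynomial in $z^{-1}$ (because each $h_{(n)}$ with $n\geq 1$ strictly lowers conformal weight). For the grading axiom I would decompose $V$ into simultaneous eigenspaces of $\sigma$ and $h_{(0)}$: if $\sigma a=e^{-2\pii s}a$ and $h_{(0)}a=\al a$ with $\al\in(1/T)\Z$, then $\Delta(h,z)a$ carries the overall factor $z^{\al}$, which shifts the fractional powers of $z$ appearing in $Y_{M^{(h)}}(a,z)$ by $\al$; this matches the eigenvalue $e^{-2\pii (s+\al)}$ by which $\sigma_h\sigma$ acts on $a$.

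The substantive step is the twisted Jacobi identity, and the key tool is a conjugation-type identity expressing $Y_M(\Delta(h,z)a,z)$ as a formal rearrangement of $\Delta(h,z)$ applied to $Y_M(a,z)$, modulo explicit monodromy corrections; this identity is derived directly from the standard commutator formula $[h_{(n)},Y_M(a,z_0)] = \sum_{i\geq 0}\binom{n}{i}z_0^{n-i}Y_M(h_{(i)}a,z_0)$ together with the definition of $\Delta$. Feeding this into the twisted Jacobi identity satisfied by $Y_M$ with respect to $\sigma$ and applied to $\Delta(h,z_1)a$ and $\Delta(h,z_2)b$, one finds that the mismatch between $\Delta(h,z_1)$ and $\Delta(h,z_2)$ produces an additional factor of the form $(z_1/z_2)^{h_{(0)}}$, which is precisely the monodromy required to upgrade the $\sigma$-twist to a $\sigma_h\sigma$-twist on the delta-function side. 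The $L(-1)$-derivative axiom is an analogous direct computation using $[L(-1),\Delta(h,z)]=\frac{d}{dz}\Delta(h,z)$.

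Once $(M^{(h)}, Y_{M^{(h)}})$ is known to be a twisted module, irreducibility is immediate: $\Delta(h,z)$ is invertible on every $h_{(0)}$-eigenspace, so the Fourier modes of $Y_{M^{(h)}}$ generate the same subalgebra of $\End M$ as those of $Y_M$, and therefore the submodule lattices of $M^{(h)}$ and $M$ coincide. The main obstacle throughout is the careful formal-series bookkeeping inside the Jacobi identity --- in particular, matching the branches of $(z_1-z_2)^{h_{(0)}}$ with the monodromy contributions from the two $\Delta(h,z_i)$ factors in the $z_1\delta\bigl((z_2+z_0)/z_1\bigr)$ distribution; once that is done, all remaining steps are routine.
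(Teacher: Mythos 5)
The paper offers no proof of this proposition: it is quoted verbatim from Li [Li96, Proposition~5.4], so there is no in-paper argument to compare against. Your outline reconstructs what is essentially Li's original verification, and it is sound at the level of detail given: the vacuum, truncation and grading axioms are handled exactly as you say (the key points being $\Delta(h,z)\vac=\vac$, the fact that the exponential in $\Delta(h,z)$ acts as a polynomial in $z^{-1}$ on each weight space, and the compatibility of the shift $z^{h_{(0)}}$ with the eigenvalue of $\sigma_h\sigma$); irreducibility follows as you argue from $\Delta(h,z)^{-1}=\Delta(-h,z)$, which shows the modes of $Y_{M^{(h)}}$ and of $Y_M$ span the same subspace of $\End M$ and hence have the same invariant subspaces. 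The one place where your sketch is imprecise is the substantive Jacobi-identity step. The key lemma is the conjugation identity $\Delta(h,z)Y(a,z_0)\Delta(h,z)^{-1}=Y(\Delta(h,z+z_0)a,z_0)$ on $V$, applied inside the argument of $Y_M\bigl(Y(\Delta(h,z_1)a,z_0)\Delta(h,z_2)b,z_2\bigr)$ on the right-hand side of the $\sigma$-twisted Jacobi identity; on the support of $z_2^{-1}\delta\bigl((z_1-z_0)/z_2\bigr)$ the mismatch between $\Delta(h,z_1)$ and $\Delta(h,z_2+z_0)$ contributes the factor $\bigl((z_1-z_0)/z_2\bigr)^{-\alpha}$, where $\alpha$ is the $h_{(0)}$-eigenvalue of $a$, and this combines with the existing $\bigl((z_1-z_0)/z_2\bigr)^{-k/|\sigma|}$ to produce exactly the exponent dictated by the $\sigma_h\sigma$-eigenvalue of $a$. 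Your factor ``$(z_1/z_2)^{h_{(0)}}$'' is not the correct expression (it drops the $z_0$-dependence that the binomial expansion conventions hinge on), though it conveys the right idea; since you present this step only as a strategy I would not call it a gap, but a written-out version must track $\bigl((z_1-z_0)/z_2\bigr)^{h_{(0)}}$ and its expansion carefully.
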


Assume that $V$ is self-dual.
Then there exists a symmetric invariant bilinear form $\langle\cdot|\cdot\rangle$ on $V$, which is unique up to scalar (\cite{Li3}).
We normalize it so that  $\langle\1|\1\rangle=-1$.
Then for $a,b\in V_1$, we have $\langle a|b\rangle\1=a_{(1)}b$.

For a $\sigma$-twisted $V$-module $M$ and $a\in V$, we denote by $a_{(i)}^{(h)}$ the operator which corresponds to the coefficient of $z^{-i-1}$ in $Y_{M^{(h)}}(a,z)$, i.e.,
$$Y_{M^{(h)}}(a,z)=\sum_{i\in\Z}a_{(i)}^{(h)}z^{-i-1}\quad \text{for}\quad a\in V.$$
The $0$-th mode of an element $x\in V_1$ on $M^{(h)}$ is given by
\begin{equation}
x^{(h)}_{(0)}=x_{(0)}+\langle h|x\rangle {\rm id}.\label{Eq:V1h}
\end{equation}
Let us denote by $L^{(h)}(n)$ the $(n+1)$-th mode of the conformal vector $\omega\in V$ on $M^{(h)}$.
Then the $L(0)$-weights on $M^{(h)}$ are given by
\begin{equation}
L^{(h)}(0)=L(0)+h_{(0)}+\frac{\langle h|h\rangle}{2}{\rm id}.\label{Eq:Lh}
\end{equation}

\section{Leech lattice and its isometry group}\label{sec:4}
We review some notations and certain basic properties
of the Leech lattice $\Lambda$ and its isometry group $O(\Lambda)$, which is also known as $Co_0$, a perfect group of order $2^{22}\cdot 3^9\cdot 5^4\cdot 7^2\cdot 11\cdot 13\cdot 23$.

\subsection{Hexacode balance and the Leech lattice}

Let $\Omega=\{1,2,3,\dots, 24\}$ be a set of $24$ elements and let
$\mathcal{G}$ be the (extended) Golay code of length $24$ indexed by $\Omega$. A subset
$S\subset \Omega$ is called a $\mathcal{G}$-set if
$S=\mathrm{supp}\,\alpha$ for some codeword $\alpha\in \mathcal{G}$. We
will identify a $\mathcal{G}$-set with the corresponding codeword
in $\mathcal{G}$. 
A \textit{sextet} is a
partition of $\Omega$ into six $4$-element sets of which the union
of any two forms a $\mathcal{G}$-set. 

For explicit calculations,  we use the notion of \textit{hexacode balance} to denote the codewords of the Golay code and the vectors in the Leech lattice (cf. \cite[Chapter 11]{CS} and \cite[Chapter 5]{G12}). First we arrange  the set $\Omega$ into
a $4\times 6$ array such that the six columns form a sextet.
For each codeword in $\mathcal{G}$, $0$ and $1$ are marked by a
blanked and non-blanked space, respectively, at the corresponding
positions in the array. For example, $(1^8 0^{16})$ is denoted by
the array
\begin{equation}
\begin{array}{|cc|cc|cc|}
\hline *& *&\ &\ &\ &\ \\
* & *&\ &\ &\ &\ \\
* & *&\ &\ &\ &\ \\
* & *&\ &\ &\ &\ \\ \hline
\end{array}\label{Eq:1^80^16}
\end{equation}

Let $(\cdot|\cdot)$ be the inner product of $\R^{24}$ and let $e_1,e_2,\dots,e_{24}\in\R^{24}$ be an orthogonal basis of squared norm $2$, that is, $(e_i|e_j)=2\delta_{i,j}$ for $1\le i,j\le 24$.
Denote $e_{X}:=\sum_{i\in X}e_{i}$ for $X\in \mathcal{G}$.
The following is a standard construction of the Leech lattice.
\begin{definition}[\cite{CS}]\label{gen}
 The \textit{(standard) Leech lattice} $\Lambda $ is a lattice of rank 24 generated by
the vectors:
\begin{eqnarray*}
&&\frac{1}{2}e_{X}\, ,\quad \text{where } X\text{ is a generator of the Golay code }%
\mathcal{G}; \\
&&\frac{1}{4}e_{\Omega }-e_{1}\,\,; \\
&&e_{i}\pm e_{j}\,,\quad \text{ }i,j \in \Omega.
\end{eqnarray*}
\end{definition}

\begin{remark}
By arranging the set $\Omega$ into a $4\times 6$ array, every vector in the Leech lattice $\Lambda$ can be written as the form
\[
X=\frac{1}{\sqrt{8}}\left[ X_1 X_2 X_3 X_4 X_5 X_6\right],\quad \text{
juxtaposition of column vectors}.
\]
For example,
\[
\frac{1}{\sqrt{8}}\,
\begin{array}{|rr|rr|rr|}
 \hline 2  & 2& 0  & 0 & 0 & 0 \\
 2 &  2 &  0&  0& 0& 0\\
 2 &  2& 0& 0&  0 & 0 \\
 2 &  2& 0&  0& 0 & 0\\ \hline
\end{array}
\]
denotes the vector $\frac1{2}\, e_{(1^80^{16})}$, where $(1^80^{16})$ is the
codeword corresponding to \eqref{Eq:1^80^16}.
\end{remark}

\subsection{An isometry of the Leech lattice of order $7$}\label{SS:ord7}
In this subsection, we study a certain automorphism $\tau$ of $\Lambda$ of order $7$.

By the character table (cf. \cite[Page 184]{Atlas}), there exist exactly two conjugacy classes of elements of order $7$ in $O(\Lambda)$.  
Let $\tau$ be an isometry of $\Lambda$ of order $7$ such that the trace of $\tau$ on $\Lambda$ is $3$. Such elements form a unique conjugacy class in $O(\Lambda)$ and the set of fixed-points of $\tau$ in $\Lambda$ is a sublattice of rank $6$.  
For the simplicity of calculation, we fix $\tau$  such that  (see \cite[Figure 11.21]{CS})
\begin{center}
\begin{tikzpicture}
\draw[step=1cm] (0,0) grid (3,2);
\fill(0.25,0.25) circle(1.5pt);
\fill(0.75,0.25) circle(1.5pt);
\fill(1.25,0.25) circle(1.5pt);
\fill(1.75,0.25) circle(1.5pt);
\fill(2.25,0.25) circle(1.5pt);
\fill(2.75,0.25) circle(1.5pt);
\fill(0.25,0.75) circle(1.5pt);
\fill(0.75,0.75) circle(1.5pt);
\fill(1.25,0.75) circle(1.5pt);
\fill(1.75,0.75) circle(1.5pt);
\fill(2.25,0.75) circle(1.5pt);
\fill(2.75,0.75) circle(1.5pt);
\fill(0.25,1.25) circle(1.5pt);
\fill(0.75,1.25) circle(1.5pt);
\fill(1.25,1.25) circle(1.5pt);
\fill(1.75,1.25) circle(1.5pt);
\fill(2.25,1.25) circle(1.5pt);
\fill(2.75,1.25) circle(1.5pt);
\fill(0.25,1.75) circle(1.5pt);
\fill(0.75,1.75) circle(1.5pt);
\fill(1.25,1.75) circle(1.5pt);
\fill(1.75,1.75) circle(1.5pt);
\fill(2.25,1.75) circle(1.5pt);
\fill(2.75,1.75) circle(1.5pt);
\draw[thick] (0.25,1.25) -- (0.75, 1.25);
\draw[thick] (0.75,0.75) -- (0.75,1.25);
\draw[thick] (0.75,0.75) .. controls (0.95,1.25)  .. (0.75,1.75);
\draw[thick] (0.25,1.25) -- (0.75,0.25);
\draw[thick] (0.75,0.25) -- (0.25, 0.75);
\draw[thick][->] (0.25, 0.75) --  (0.25,0.27);
\draw[thick] (1.25,1.25) -- (1.75, 1.25);
\draw[thick] (1.75,0.75) -- (1.75,1.25);
\draw[thick] (1.75,0.75) .. controls (1.95,1.25)  .. (1.75,1.75);
\draw[thick] (1.25,1.25) -- (1.75,0.25);
\draw[thick] (1.75,0.25) -- (1.25, 0.75);
\draw[thick][->] (1.25, 0.75) --  (1.25,0.27);
\draw[thick] (2.25,1.25) -- (2.75, 1.25);
\draw[thick] (2.75,0.75) -- (2.75,1.25);
\draw[thick] (2.75,0.75) .. controls (2.95,1.25)  .. (2.75,1.75);
\draw[thick] (2.25,1.25) -- (2.75,0.25);
\draw[thick] (2.75,0.25) -- (2.25, 0.75);
\draw[thick][->] (2.25, 0.75) --  (2.25,0.27); 
\draw (-0.5,1) node{$\tau=$}; 
\draw (3.1,0.9) node{.};
\end{tikzpicture}
\end{center}

Let $\mathfrak{h}= \C\otimes_\Z \Lambda$.
We extend the form $(\cdot|\cdot)$  to $\mathfrak{h}$ $\C$-bilinearly.
We also extend the isometry $\tau$  to $\mathfrak{h}$ $\C$-linearly.
Let $\mathfrak{h}_{(0)}$ be the fixed-point subspace of $\tau$ in $\mathfrak{h}$ and let $P_0$ be the orthogonal projection from $\mathfrak{h}$ to $\mathfrak{h}_{(0)}$.

\begin{lemma}\label{Eq:M}
Let $ M=((1-P_0)\mathfrak{h}) \cap \Lambda$ and $r\in\{1,2,\dots,6\}$.  
Then $M=(1-\tau^r)\Lambda$.
\end{lemma}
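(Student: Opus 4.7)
The plan is to reduce to the case $r=1$ by an elementary ring-theoretic argument, prove the easy inclusion $(1-\tau)\Lambda \subseteq M$ by orthogonality, and deduce the reverse inclusion from a careful index comparison in an auxiliary lattice.

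For the reduction: since $\gcd(r,7)=1$, pick $s$ with $rs \equiv 1 \pmod{7}$. The factorizations $1-\tau^r = (1-\tau)(1+\tau+\cdots+\tau^{r-1})$ and $1-\tau = 1-\tau^{rs} = (1-\tau^r)(1+\tau^r+\cdots+\tau^{(s-1)r})$, valid as operators on $\h$ because $\tau^7=1$, give $(1-\tau^r)\Lambda = (1-\tau)\Lambda$. So it suffices to prove $M=(1-\tau)\Lambda$. The inclusion $(1-\tau)\Lambda \subseteq M$ is immediate: for $v\in\Lambda$ and $x\in\mathfrak{h}_{(0)}$ we have $((1-\tau)v \mid x) = (v\mid x) - (v\mid \tau^{-1}x) = 0$, so $(1-\tau)v \in \Lambda \cap (1-P_0)\h = M$.

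For the reverse inclusion, I would introduce the auxiliary lattice $N:=(1-P_0)\Lambda \subseteq (1-P_0)\h$, a rank-$18$ lattice containing $M$, and note that $(1-\tau)\Lambda = (1-\tau)N$ since $(1-\tau)P_0=0$. On $(1-P_0)\h$, the isometry $\tau$ has eigenvalues the six primitive $7$th roots of unity, each of multiplicity $3$, so
\[
[N : (1-\tau)N] \;=\; \bigl|\det\bigl((1-\tau)|_{(1-P_0)\h}\bigr)\bigr| \;=\; \Phi_7(1)^3 \;=\; 7^3.
\]
The surjection $v\mapsto (1-P_0)v$ from $\Lambda$ onto $N$ has kernel $\Lambda^\tau := \Lambda \cap \mathfrak{h}_{(0)}$ and sends $\Lambda^\tau + M$ onto $M$, so $[N:M] = [\Lambda : \Lambda^\tau + M]$. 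Therefore $[M:(1-\tau)\Lambda] = 7^3/[\Lambda : \Lambda^\tau + M]$, and the problem reduces to showing $[\Lambda : \Lambda^\tau + M] = 7^3$.

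Because $\Lambda$ is unimodular and $\Lambda^\tau$ is primitive with orthogonal complement $M$, the standard theory of discriminant forms gives $[\Lambda:\Lambda^\tau+M] = \mathrm{disc}(\Lambda^\tau)$. From $7(\Lambda^\tau)^* = (1+\tau+\cdots+\tau^6)\Lambda \subseteq \Lambda^\tau$ one sees that the discriminant group is $7$-elementary, so $\mathrm{disc}(\Lambda^\tau)=7^a$ for some $0\le a \le 6$. Pinning down $a=3$ is the main obstacle; I would do this by direct computation using the explicit description of $\tau$ above (as a product of three $7$-cycles on $21$ of the $24$ coordinates together with $3$ fixed coordinates), writing down a Gram matrix for $\Lambda^\tau$ — the rank-$6$ fixed sublattice spanned by the three fixed coordinate vectors together with the three orbit sums, together with any additional glue vectors coming from the Golay code — and computing its determinant. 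With $\mathrm{disc}(\Lambda^\tau)=7^3$ in hand, the two indices agree, giving $[M:(1-\tau)\Lambda]=1$ and hence $M=(1-\tau)\Lambda$.
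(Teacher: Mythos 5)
Your overall strategy is sound and genuinely different from the paper's. The reduction to $r=1$ via $\gcd(r,7)=1$, the easy inclusion by orthogonality, the index $[N:(1-\tau)N]=\Phi_7(1)^3=7^3$ (the nontrivial eigenvalues of $\tau$ are the six primitive $7$th roots of unity, each of multiplicity $3$, as follows from $\mathrm{tr}\,\tau=3$), the identification $[N:M]=[\Lambda:\Lambda^\tau+M]$, and the passage to $\mathrm{disc}(\Lambda^\tau)$ via unimodularity of $\Lambda$ are all correct, and the target value $\mathrm{disc}(\Lambda^\tau)=7^3$ is indeed the right one. The paper instead proves the reverse inclusion $M\subseteq(1-\tau^r)\Lambda$ head-on: it writes explicit generators of $M$ in terms of the standard generators of $\Lambda$ (the relevant Golay codewords forming the $9$-dimensional subcode $\mathcal{G}_0$ of words vanishing at the three $\tau$-fixed positions), establishes $(1-\tau^r)\mathcal{G}=\mathcal{G}_0$ by a dimension count, and checks each generator lies in $(1-\tau^r)\Lambda$. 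Your index-theoretic route is conceptually cleaner and avoids listing generators of $M$, at the price of having to compute one lattice invariant.

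However, as written your proof is incomplete precisely at that invariant: you state that pinning down $a=3$ is the main obstacle and that you ``would do this by direct computation,'' but the computation is not carried out, and it is not a formality. Your index argument by itself only yields $[M:(1-\tau)\Lambda]=7^{3-a}\ge 1$, hence $a\le 3$; the needed inequality $a\ge 3$ requires actually exhibiting a $\Z$-basis (or Gram matrix) of $\Lambda^\tau$, and the ``obvious'' fixed vectors are not enough to write down naively (note $e_i\notin\Lambda$; only $2e_i$ and $e_i\pm e_j$ lie in $\Lambda$, and the glue coming from Golay codewords and from $\tfrac14 e_\Omega-e_1$ must be accounted for). Determining that glue is essentially the same explicit computation the paper performs in Lemma \ref{P0}, which lists generators of $P_0(\Lambda)=(\Lambda^\tau)^*$. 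To close your argument you should either carry out the Gram-matrix computation for $\Lambda^\tau$ or invoke Lemma \ref{P0} and verify that the determinant of the Gram matrix of its six generators equals $7^{-3}$. Until one of these is done, the final step --- and hence the equality $M=(1-\tau^r)\Lambda$ --- is not established.
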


\begin{proof} 

We note that $(1-P_0)\mathfrak{h} = \mathfrak{h}_{(0)}^\perp = \{y\in \mathfrak{h} \mid ( y|x) =0 \text{ for all } x\in \mathfrak{h}_{(0)}\}$.  Hence, $ M= \{\alpha\in \Lambda \mid ( \alpha|x) =0 \text{ for all } x\in \mathfrak{h}_{(0)}\}$.
It is clear that $(1-\tau^r)\Lambda \subset M$ since $(1-\tau^r)\Lambda$ is orthogonal to $\mathfrak{h}_{(0)}$.

By the definition of $\tau$, we also know that the fixed-point subspace $\mathfrak{h}_{(0)}$ of $\tau^r$ is given by 
\[
\left\{ \left. 
\begin{array}{|cc|cc|cc|}
 \hline  a_0 & a& b_0 & b& c_0  & c \\
 a& a& b &  b &  c&   c\\
 a& a & b &  b& c&  c \\
 a& a & b &  b& c   & c\\ \hline
\end{array}
\ \right | a_0, b_0,c_0, a,b,c \in \C  \right\},
\]
and $M$  is given by 
\[
\left\{ \left.
\begin{array}{|cc|cc|cc|}
 \hline  0 & a_1& 0 & b_1& 0  & c_1 \\
 a_2& a_3& b_2 &  b_3 &  c_2&   c_3\\
 a_4& a_5 & b_4 &  b_5& c_4&  c_5 \\
 a_6& a_7 & b_6 &  b_7& c_6   & c_7\\ \hline
\end{array}
\in \Lambda\,  \right |\,   \sum_{i=1}^7 a_i= \sum_{i=1}^7 b_i=\sum_{i=1}^7 c_i=0 \right\}. 
\] 
Note that the indexes in $\Omega$ corresponding to $a_0,b_0,c_0$ are $1$, $9$, $17$, respectively.

Let $\mathcal{G}_0=\{(c_1,\dots,c_{24})\in\mathcal{G}\mid c_1=c_9=c_{17}=0\}.$
Then the dimension of $\mathcal{G}_0$ is $9$.
Clearly, $(1-\tau^r)\mathcal{G}\subset\mathcal{G}_0$.
By using explicit generators of $\mathcal{G}$ (see for example \cite[(5.35)]{G12}), it is straightforward to show that $(1-\tau^r)\mathcal{G}$ is also a $9$-dimensional subcode of $\mathcal{G}_0 $ and hence we have 
\begin{equation}
(1-\tau^r)\mathcal{G}=\mathcal{G}_0.\label{Eq:G0}
\end{equation}

For $Y\subset\Omega$, let $\varepsilon_Y$ denote the involution in $O(\Lambda)$ that acts by $-1$ on the coordinates corresponding to the elements in $Y$.  Let $E^1$, $E^2$ and $E^3$ be the $\mathcal{G}$-sets corresponding to the codewords
$(1^8 ,0^{8},0^8)$, $(0^8, 1^8, 0^8)$, and $(0^{8},0^8, 1^8)$, respectively.
 
By the generators of $\Lambda$ given in Definition \ref{gen}, $M$ is generated by 
\[
\begin{split}
\varepsilon_Y\frac{1}{2}e_{X},\quad & \text{where } X\text{ is a generator of the code }%
\mathcal{G}_0\\ & \text{and } Y\subset\Omega\
 \text{with }  |X\cap Y\cap E^i|=|X\cap E^i|/2 \text{ for } i=1,2,3; \\
e_{i}-e_{j}\,, \quad & \text{where }i,j\in E^1\setminus\{1\},\ i,j\in E^2\setminus\{9\} \text{ or }\ i,j\in E^3\setminus \{17\}.
\end{split}
\]
By \eqref{Eq:G0}, one can easily see that $M\subset (1-\tau^r)\Lambda$.
Thus $M=(1-\tau^r)\Lambda$ as desired. 
\end{proof}

\subsection{Calculations in $P_0(\Lambda)$}
In this subsection, we prove some lemmas about $P_0(\Lambda)$, which will be used later.
Recall that $P_0$ is the orthogonal projection from $\mathfrak{h}$ to $\mathfrak{h}_{(0)}$.

\begin{lemma}\label{P0}
The lattice $P_0(\Lambda)$ is spanned over $\Z$ by the following vectors:
\begin{align*}
&\frac{1}{7\sqrt{8}}\,
\begin{array}{|rr|rr|rr|}
 \hline  0 & 4& 0 & 4& 0  & 0 \\
 4& 4& 4 &  4 &  0&   0\\
 4& 4 & 4 &  4& 0&  0 \\
 4& 4 & 4 &  4& 0   & 0\\ \hline
\end{array},  
&& \frac{1}{7\sqrt{8}}\,
\begin{array}{|rr|rr|rr|}
 \hline  0  & 0& 0 & 4& 0 & 4 \\
 0  & 0& 4& 4& 4 &  4 \\
 0  & 0& 4& 4 & 4 &  4 \\
 0  & 0& 4& 4 & 4 &  4\\ \hline
\end{array}, \\
&\frac{1}{7\sqrt{8}}\,
\begin{array}{|rr|rr|rr|}
 \hline  0 & 4& 0 & -4& 0  & 0 \\
 4& 4& -4 &  -4 &  0&   0\\
 4& 4 & -4 &  -4& 0&  0 \\
 4& 4 & -4 &  -4& 0   & 0\\ \hline
\end{array},  &
&\frac{1}{7\sqrt{8}}\,
\begin{array}{|rr|rr|rr|}
 \hline -14  & 2& 0  & 0 & 0 & 0 \\
 2 &  2 &  0&  0& 0& 0\\
 2 &  2& 0& 0&  0 & 0 \\
 2 &  2& 0&  0& 0 & 0\\ \hline
\end{array}, \\
& \frac{1}{7\sqrt{8}}\,
\begin{array}{|rr|rr|rr|}
 \hline  0 & 0& -14  & 2& 0  & 0 \\
 0& 0& 2 &  2 &  0&   0\\
 0&  0 & 2 &  2& 0&  0 \\
 0& 0 & 2 &  2& 0   & 0\\ \hline
\end{array}, 
&& 
\frac{1}{7\sqrt{8}}\,
\begin{array}{|rr|rr|rr|}
 \hline  -7 & 1& -7  & 1& -7  & -3 \\
 1& 1& 1 &  1 &  -3&  -3\\
 1&  1 & 1 &  1& -3&  -3 \\
 1& 1 & 1 &  1& -3   & -3\\ \hline
\end{array}. \\
\end{align*}
\end{lemma}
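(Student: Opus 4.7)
The plan is to establish $P_0(\Lambda)=\Z v_1+\cdots+\Z v_6$ by proving both inclusions. Set $s_1=1,s_2=9,s_3=17$ and $y_r=\tfrac{1}{7}\sum_{j\in E^r\setminus\{s_r\}}e_j$ for $r=1,2,3$, so that $(e_{s_r},y_r)_{r=1,2,3}$ is a basis of $\mathfrak{h}_{(0)}$. Since $\tau$ fixes each $e_{s_r}$ and cyclically permutes $E^r\setminus\{s_r\}$, we have $P_0(e_{s_r})=e_{s_r}$ and $P_0(e_i)=y_r$ for all $i\in E^r\setminus\{s_r\}$. In this basis, the six target vectors read
\begin{align*}
&v_1=y_1+y_2,\quad v_2=y_2+y_3,\quad v_3=y_1-y_2,\\
&v_4=-\tfrac{1}{2}e_1+\tfrac{1}{2}y_1,\quad v_5=-\tfrac{1}{2}e_9+\tfrac{1}{2}y_2,\\
&v_6=-\tfrac{1}{4}(e_1+e_9+e_{17})+\tfrac{1}{4}(y_1+y_2-3y_3).
\end{align*}

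For the inclusion $\Z v_i\subseteq P_0(\Lambda)$ I would exhibit explicit preimages in $\Lambda$. The vectors $v_1,v_2,v_3$ are respectively the projections of $e_2+e_{10}$, $e_{10}+e_{18}$, and $e_2-e_{10}$, all of which lie in $\Lambda$. For $v_4$ I take $\mu_4=-\tfrac{1}{2}e_{E^1}+(e_2+e_3+e_4+e_5)\in\Lambda$, and analogously $\mu_5=-\tfrac{1}{2}e_{E^2}+(e_{10}+e_{11}+e_{12}+e_{13})$ for $v_5$. For $v_6$ I take $\mu_6=\tfrac{3}{4}e_\Omega-e_1-e_9-e_{17}-5(e_2+e_{10})-6e_{18}$, which lies in $\Lambda$ as the sum of the three Leech vectors $\tfrac{1}{4}e_\Omega-e_{s_r}$, a multiple of $e_2+e_{10}\in\Lambda$, and a multiple of $2e_{18}\in\Lambda$. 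Each identity $P_0(\mu_i)=v_i$ follows from a direct coefficient computation using the explicit formula for $P_0$ above.

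For the reverse inclusion $P_0(\Lambda)\subseteq\Z v_1+\cdots+\Z v_6$, the lattice $\Lambda$ is generated by the vectors of Definition \ref{gen}, so by linearity of $P_0$ it suffices to write $P_0$ of each such generator as a $\Z$-combination of the $v_i$'s. Inverting the six relations above yields expressions for $e_{s_r}$ and $y_r$ as half-integer combinations of $v_1,\ldots,v_6$; when substituted into the projections of the Leech generators these denominators always cancel. For example, the $e_i\pm e_j$ generators split into nine cases (depending on the blocks of $i,j$ and whether either index equals $1$, $9$, or $17$), each yielding a formula such as $P_0(e_1+e_9)=v_1-2v_4-2v_5$. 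Similarly $P_0(\tfrac{1}{4}e_\Omega-e_1)=v_1+v_2+2v_4-v_6$, and the Golay half-codewords project for instance to $P_0(\tfrac{1}{2}e_{E^1})=2v_1+2v_3-v_4$ (and analogously for $E^2$, $E^3$), or $P_0(\tfrac{1}{2}e_X)=2v_1-v_4-v_5$ when $X$ is the octad consisting of two columns straddling blocks $1$ and $2$.

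The main obstacle is the Golay-code part of the reverse inclusion: one must verify integrality of $P_0(\tfrac{1}{2}e_X)$ on a generating set of $\mathcal{G}$ (twelve suitably chosen octads suffice). The cyclic symmetry among $\{E^1,E^2,E^3\}$ and inside each orbit $E^r\setminus\{s_r\}$ drastically reduces the number of independent cases, so that the resulting case analysis is routine though lengthy; the computation closely parallels the order-$5$ analogue carried out in \cite[Section 10]{LS3}.
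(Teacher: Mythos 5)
Your proposal is correct and follows essentially the same route as the paper, whose proof simply asserts that the lemma ``follows easily by a direct calculation using Definition 3.1 and a basis for the Golay code''; your version supplies the details (explicit preimages in $\Lambda$ for one inclusion, inversion of the basis change $\{e_{s_r},y_r\}\leftrightarrow\{v_i\}$ for the other), and the sample computations you exhibit all check out. As a small streamlining of your final step, the integrality of $P_0(\tfrac{1}{2}e_X)$ holds uniformly for \emph{every} codeword $X\in\mathcal{G}$, not just for a chosen set of twelve generators: the coefficients of $v_1,\dots,v_6$ depend only on the numbers $|X\cap E^r|$ and on whether $s_r\in X$, and their integrality reduces to $|X|\equiv 0\pmod 4$ and $|X\cap E^r|$ even, both of which are automatic since the Golay code is doubly even and self-orthogonal with $E^r\in\mathcal{G}$.
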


\begin{proof}
Note that $P_0=\frac{1}7\sum_{i=0}^6 \tau^i$. The result now follows easily by a direct calculation using Definition \ref{gen} and a basis for the Golay code (cf. \cite[(5.35)]{G12}). 
\end{proof}

We now set 
\begin{equation} \label{def_f}
f= \frac{1}{7\sqrt{8}}\,
\begin{array}{|rr|rr|rr|}
 \hline  5 & 1& 1  & 1& 3  & 3 \\
 1& 1& 1 &  1 &  3&  3\\
 1&  1 & 1 &  1& 3&  3 \\
 1& 1 & 1 &  1& 3   & 3\\ \hline
\end{array}\in\mathfrak{h}_{(0)}.
\end{equation}
Then $(f|f)=2/7$ and $f\notin P_0(\Lambda)$.  

\begin{lemma}\label{Lem:S}For $r\in\{\pm1,\pm2,\pm3\}$, we set
\[
\mathcal{S}^{r} = \left\{ a+ rf \ \left|\ a \in P_0(\Lambda)\text{ and }  |a+ rf|^2 =\frac{2}7\right. \right\}.
\]
Then
\begin{align*}
&\mathcal{S}^1=\{\beta_0, \beta_1,\beta_2,\beta_3,\beta_4,\beta_5,\beta_6\}, && \mathcal{S}^{-1} = -\mathcal{S}^1,\\
&\mathcal{S}^2=\{\beta_0+\beta_1, \beta_1+\beta_2,\beta_2+\beta_3,\beta_3+\beta_4,\beta_4 +\beta_5,\beta_5+\beta_6,\beta_6+\beta_0\}, && \mathcal{S}^{-2} = -\mathcal{S}^2,\\
&\mathcal{S}^3=\{\beta_0+\beta_1+\beta_2,\beta_1+\beta_2+\beta_3,\beta_2+\beta_3+\beta_4,\dots, \beta_6+\beta_0+\beta_1\}, && \mathcal{S}^{-3} = -\mathcal{S}^3,
\end{align*}
where $\beta_1=f$,
\begin{align*}
\beta_2=\frac{1}{7\sqrt{8}}\,
\begin{array}{|rr|rr|rr|}
 \hline  -2& -2& -6  & -2& -4  & 0 \\
 -2& -2& -2 &  -2 &  0&  0\\
 -2&  -2 & -2 &  -2& 0&  0 \\
 -2& -2 & -2 &  -2& 0   & 0\\ \hline
\end{array}, &
&\beta_3= \frac{1}{7\sqrt{8}}\,
\begin{array}{|rr|rr|rr|}
 \hline  -2& 2& 8  & 0& -4  & 0 \\
 2& 2& 0 &  0 &  0&  0\\
 2& 2 & 0 &  0& 0&  0 \\
 2& 2 & 0 &  0& 0   & 0\\ \hline
\end{array},  \\
\beta_4=\frac{1}{7\sqrt{8}}\,
\begin{array}{|rr|rr|rr|}
 \hline  5 & -3& 1  & 1& 3  & -1 \\
 -3& -3& 1 &  1 &  -1&  -1\\
 -3&  -3 & 1 &  1& -1&  -1 \\
 -3& -3 & 1 &  1& -1   & -1\\ \hline
\end{array}, &
&\beta_5=\frac{1}{7\sqrt{8}}\,
\begin{array}{|rr|rr|rr|}
 \hline  -2& 2& -6  & 2& -4  & 0 \\
 2& 2& 2 &  2 &  0&  0\\
 2&  2 & 2 &  2& 0&  0 \\
 2& 2 &  2 &  2& 0   & 0\\ \hline
\end{array},\\
\beta_6= \frac{1}{7\sqrt{8}}\,
\begin{array}{|rr|rr|rr|}
 \hline  5 & 1& 1  &  -3  & 3& -1 \\
 1& 1 &  -3&  -3& -1 &  -1\\
 1&  1 & -3&  -3 & -1 &  -1\\
 1& 1 & -3   & -3& -1 &  -1\\ \hline
\end{array}, &
&
\beta_0=\frac{1}{7\sqrt{8}}\,
\begin{array}{|rr|rr|rr|}
 \hline  -9 & -1& 1  & 1& 3  & -1 \\
 -1& -1& 1 &  1 &  -1& -1\\
 -1&  -1 & 1 & 1& -1&  -1 \\
 -1& -1 & 1 &  1& -1   & -1\\ \hline
\end{array}. 
&& 
\end{align*}
\end{lemma}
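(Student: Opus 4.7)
The plan is to verify the lemma in three stages. The first stage establishes that each listed vector satisfies both the norm condition $|v|^2 = 2/7$ and the coset condition $v \in rf + P_0(\Lambda)$. For the seven vectors $\beta_0,\ldots,\beta_6$ of $\mathcal{S}^1$, I would directly compute $(\beta_i|\beta_i) = 2/7$ from the explicit $4 \times 6$ array coordinates, and express each difference $\beta_i - f$ as an explicit $\Z$-combination of the six generators of $P_0(\Lambda)$ from Lemma \ref{P0}; this is a routine six-variable linear-algebra exercise.

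The second stage computes the full Gram matrix of the $\beta_i$'s and establishes the relations $(\beta_i|\beta_i) = 2/7$, $(\beta_i|\beta_j) = -1/7$ when $j \equiv i \pm 1 \pmod 7$, $(\beta_i|\beta_j) = 0$ otherwise, and $\sum_{i=0}^{6}\beta_i = 0$. These follow from direct inner-product computations in the given coordinates. Combined with Stage 1, they yield the remaining listed elements immediately: any cyclic partial sum $\beta_i + \beta_{i+1} + \cdots + \beta_{i+k-1}$ (indices modulo $7$) has squared length $k \cdot (2/7) + 2(k-1) \cdot (-1/7) = 2/7$, and lies in $kf + P_0(\Lambda)$ because each $\beta_i \in f + P_0(\Lambda)$. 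The equalities $\mathcal{S}^{-r} = -\mathcal{S}^r$ follow immediately from the definition.

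The third stage proves completeness. For $v = a + rf \in \mathcal{S}^r$ with $a \in P_0(\Lambda)$, expanding $|v|^2 = 2/7$ yields $|a|^2 + 2r(a|f) + 2r^2/7 = 2/7$, and Cauchy--Schwarz then forces $|a| \le (|r|+1)\sqrt{2/7}$, confining $a$ to a bounded region of $P_0(\Lambda)$. An alternative, more conceptual route is to set $L = P_0(\Lambda) + \Z f$; since $f \notin P_0(\Lambda)$ but $7f \in P_0(\Lambda)$ (the latter easily checked from Lemma \ref{P0}), $[L:P_0(\Lambda)] = 7$, and the Gram data of Stage 2 identifies $\sqrt{7}\,L$ with a rescaled root lattice of type $A_6$ whose minimum vectors number exactly $42$. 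Under this identification, $\beta_0,\ldots,\beta_6$ correspond to the affine simple roots, and the real roots of $A_6$ partition into six cyclic $\Z/7$-orbits indexed by $r = \pm 1,\pm 2,\pm 3$; these orbits are precisely the cyclic partial sums listed in the lemma.

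The main obstacle is the completeness step. Stages 1 and 2 reduce to routine but lengthy coordinate arithmetic, while Stage 3 requires either a careful enumeration of short vectors in $P_0(\Lambda)$ within the bound $|a|^2 \le (|r|+1)^2 \cdot 2/7$, or the lattice-isomorphism identification with a rescaled copy of $A_6$. Both approaches involve nontrivial input beyond the explicit presentation of $P_0(\Lambda)$ given in Lemma \ref{P0}, but either suffices to rule out extraneous minimum vectors in the cosets $rf + P_0(\Lambda)$.
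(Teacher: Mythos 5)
Your proposal is correct and its core completeness argument (expand $|a+rf|^2=2/7$, apply Cauchy--Schwarz to bound $|a|$, then enumerate short vectors of $P_0(\Lambda)$ from Lemma \ref{P0}) is exactly the paper's proof, which treats $r=1$ and notes the other cases are similar; your Stages 1--2 just make explicit the verification the paper calls ``straightforward.'' The alternative identification of $\sqrt{7}(P_0(\Lambda)+\Z f)$ with the $A_6$ root lattice is a nice conceptual supplement but, as you note, needs a discriminant computation the paper does not undertake.
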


\begin{proof}We discuss the case $r=1$ only; the other cases can be proved by a similar argument.

Since $|f|^2=2/7$, it follows that $|a+f|^2= 2/7$ if and only if $(a|f)= -\frac{1}2 |a|^2$. By the Schwarz inequality, we also have
\[
|( a| f)| \leq \sqrt{\frac{2}7} |a|.
\]
Hence, $|a+f|^2= 2/7$ implies $|a|^2\leq 8/7$. Now using Lemma \ref{P0}, it is straightforward to determine $\mathcal{S}^1$. 
\end{proof}

\begin{remark}\label{Rem:A6}
 We also note that $\beta_i$'s satisfy the relation
\[
(\beta_i|\beta_j)=
\begin{cases}
\frac{2}7 &\text{ if } i=j,\\
-\frac{1}7  &\text{ if } i-j\equiv\pm1\pmod7,\\
0 &\text{ otherwise}.
\end{cases}
\]
\end{remark}

\section{Holomorphic VOA of central charge $24$ with Lie algebra $A_{6,7}$}
In this section, we describe how to construct a holomorphic VOA whose weight one Lie algebra has type $A_{6,7}$.

\subsection{Irreducible twisted $V_\Lambda$-modules}

Let $\Lambda$ be the Leech lattice and $\tau$ the isometry of $\Lambda$ of order $7$ given in \S \ref{SS:ord7}. 
Let $V_\Lambda$ be the lattice VOA associated to $\Lambda$.
Note that the restriction of the invariant form $\langle\cdot|\cdot\rangle$ of $V_\Lambda$ to $(V_\Lambda)_1$ coincides with the form $(\cdot|\cdot)$ on $\mathfrak{h}=\C\otimes_\Z\Lambda$ via the canonical injective map 
\begin{equation}
\mathfrak{h}\to (V_\Lambda)_1,\quad h\mapsto h{(-1)}\cdot\1.\label{Eq:inj}
\end{equation}
Since the order of $\tau$ is odd, there exists a lift of $\tau$ in $V_\Lambda$ of order $7$; we also denote it by the same symbol $\tau$.
Let $V_\Lambda[\tau^r]$, ($r=\pm 1,\pm 2, \pm 3$), be the unique irreducible $\tau^r$-twisted $V_\Lambda$-module (\cite{DLM2}).
Such a module was constructed in \cite{DL} explicitly (see \cite[Section 2.2]{SS} for a review); as a vector space,
$$V_\Lambda[\tau^r]\cong M(1)[\tau^r]\otimes\C[P_0(\Lambda)]\otimes T_r,$$
where $M(1)[\tau^r]$ is the ``$\tau^r$-twisted" free bosonic space and $T_r$ is the unique irreducible module of $\hat{M}/\widehat{(1-\tau^r)\Lambda}$ satisfying certain conditions (see \cite[Propositions 6.1 and 6.2]{Le} and \cite[Remark 4.2]{DL} for details).
It follows from $(1-\tau^r)\Lambda=M$ (see Lemma \ref{Eq:M}) that $\dim T_r=1$ for $r\in\{\pm1,\pm2, \pm 3\}$.

Let $f$ be the vector of $\Lambda$ defined in \eqref{def_f}.
We regard $f$ as a vector in $(V_\Lambda)_1$ via \eqref{Eq:inj} and define $\sigma_{f}= \exp(-2\pi \sqrt{-1} f_{(0)})$.
Then $\sigma_{f}$ is an automorphism of $V_\Lambda$ of order $7$ because $f\in \frac{1}7 \Lambda\setminus \Lambda$.
By the equality $\tau(f)=f$,  $\sigma_{f}$ commutes with $\tau$, and the automorphism $$g= \sigma_{f}\cdot\tau\in \Aut(V_\Lambda)$$ has order $7$.
Note that $g^r=\sigma_{rf}\cdot\tau^r$.

By Proposition \ref{Prop:twist}, we obtain the irreducible $g^{r}$-twisted $V_\Lambda$-module $V_\Lambda [\tau^{r}]^{(rf)}$ for $r\in\{\pm1,\pm2,\pm3\}$.
For convenience,
we fix a non-zero vector $t_{r}\in T_{r}$.
Then $T_{r}=\C t_{r}$.
By \eqref{Eq:Lh}, we have
\begin{equation}\label{Eq:Lpmf}
L^{(rf)}(0)=L(0)+ rf_{(0)}+\frac{|rf|^2}{2} id.
\end{equation}

\begin{lemma}\label{Lem:twist} For $r\in\{\pm1,\pm2,\pm3\}$, $$\left\{e^{a}\otimes t_{ r}\ \left|\ a\in P_0(\Lambda),\ |a+ r{f} |^2=\frac{2}7\right.\right\}$$ is a basis of $\left(V_\Lambda[\tau^{r}]^{( rf)}\right)_1$.
Moreover,
the dimension of $\left(V_\Lambda[\tau^{ r}]^{( rf)}\right)_1$ is $7$.
\end{lemma}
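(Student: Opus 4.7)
The plan is to use the explicit realization $V_\Lambda[\tau^r]\cong M(1)[\tau^r]\otimes\C[P_0(\Lambda)]\otimes T_r$ (with $\dim T_r=1$) together with formula \eqref{Eq:Lpmf} to pick out the vectors of $L^{(rf)}(0)$-weight $1$. First I would note that the lowest $L(0)$-weight of $V_\Lambda[\tau^r]$ equals $6/7$ for every $r\in\{\pm1,\pm2,\pm3\}$, because $\tau^r$ has the same cycle type as $\tau$ (fixed subspace of rank $6$, each primitive $7$th root of unity appearing with multiplicity $3$), giving $\rho(\tau^r)=\tfrac{3}{196}\sum_{j=1}^{6}j(7-j)=\tfrac{6}{7}$. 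Writing $v_0$ for the canonical highest-weight vector of $M(1)[\tau^r]$, the element $v_0\otimes e^a\otimes t_r$ with $a\in P_0(\Lambda)$ has $L(0)$-weight $|a|^2/2+6/7$ and, since $f\in\mathfrak{h}_{(0)}$, is an eigenvector of $f_{(0)}$ with eigenvalue $(f|a)$.

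Substituting into \eqref{Eq:Lpmf} and using $|f|^2=2/7$,
\[
L^{(rf)}(0)\bigl(v_0\otimes e^a\otimes t_r\bigr)=\Bigl(\tfrac{|a|^2}{2}+\tfrac{6}{7}+r(f|a)+\tfrac{r^2}{7}\Bigr)\bigl(v_0\otimes e^a\otimes t_r\bigr)=\Bigl(\tfrac{|a+rf|^2}{2}+\tfrac{6}{7}\Bigr)\bigl(v_0\otimes e^a\otimes t_r\bigr),
\]
so $v_0\otimes e^a\otimes t_r$ lies in the weight-one subspace precisely when $a+rf\in\mathcal{S}^r$. Lemma \ref{Lem:S} then supplies exactly seven such $a$, yielding the asserted set of vectors.

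It remains to rule out weight-one vectors lying strictly above the bottom of the $M(1)[\tau^r]$ factor. A vector of the form $h_1(-k_1)\cdots h_s(-k_s)v_0\otimes e^a\otimes t_r$ with each $k_i\in(1/7)\Z_{>0}$ has $L^{(rf)}(0)$-weight $|a+rf|^2/2+6/7+\sum_i k_i$, so being of weight $1$ forces $\sum_i k_i\leq 1/7$, hence $s=1$, $k_1=1/7$, and $|a+rf|^2=0$, i.e.\ $a=-rf$. Since $7f\in\Lambda\cap\mathfrak{h}_{(0)}\subset P_0(\Lambda)$ and $\gcd(r,7)=1$, the equation $rf\in P_0(\Lambda)$ would combine with $7f\in P_0(\Lambda)$ via a B\'ezout relation to give $f\in P_0(\Lambda)$, contradicting the observation immediately after \eqref{def_f}. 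Hence the seven vectors above exhaust the weight-one subspace.

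I do not anticipate any serious obstacle: the only nontrivial manipulation is the square-completion $|a|^2/2+r(f|a)+r^2/7=|a+rf|^2/2$, which is immediate from $|f|^2=2/7$; everything else is bookkeeping on the standard twisted-module construction.
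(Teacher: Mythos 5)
Your proposal is correct and follows essentially the same route as the paper: decompose $V_\Lambda[\tau^r]^{(rf)}$ as $M(1)[\tau^r]\otimes\C[P_0(\Lambda)]\otimes T_r$, note the lowest weight $6/7$ of the untwisted-shift part, complete the square to get the weight formula $\ell+|a+rf|^2/2+6/7$, and invoke Lemma \ref{Lem:S} for the count of seven. Your two additions --- deriving $6/7$ from the eigenvalue multiplicities of $\tau^r$ rather than citing \cite[(6.28)]{DL}, and explicitly excluding the case $\ell=1/7$, $a=-rf$ via $7f\in P_0(\Lambda)$ and $f\notin P_0(\Lambda)$ --- only make explicit steps the paper treats tersely.
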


\begin{proof} Let $w\otimes e^x\otimes t_{ r}\in V_\Lambda[\tau^{ r}]^{( rf)}$ ($w\in M(1)[\tau^r]$, $x\in P_0(\Lambda)$) be a vector whose $L(0)$-weight is $1$.
By \cite[(6.28)]{DL}, it is straightforward to show that the $L(0)$-weight of $t_{r}\in V_\Lambda[\tau^{ r}]$ is $6/7$.
Let $\ell$ be the $L(0)$-weight of $w$ in $M(1)[\tau^r]$, which belongs to $\frac{1}{7}\Z_{\ge0}$.
Then by \eqref{Eq:Lpmf},
the $L(0)$-weight of $w\otimes e^x\otimes t_{ r}$ in the twisted module $V_\Lambda[\tau^{ r}]^{( rf)}$ is
\begin{equation}
\ell+\frac{|x|^2}{2}+\frac{6}7 + r({f}|x)+\frac{|r{f}|^2}2=\ell+\frac{|x+ r{f}|^2}{2}+\frac{6}7,\label{Eq:ell}
\end{equation}
which is equal to $1$ by the assumption.
Hence by $x+rf\neq0$, we have $\ell=0$, and we may assume that $w=1$.
In addition, we obtain
\begin{equation}
|x+r{f}|^2=\frac{2}7.\label{Eq:x1}
\end{equation}
Thus, we have the first assertion. The latter assertion follows from Lemma \ref{Lem:S}. 
\end{proof}

\begin{remark}\label{Rem:pos} For any  $r\in\{\pm1,\pm2,\pm3\}$ and $x\in P_0(\Lambda)$, we have $|x+ r{f}|^2\ge2/7$.
Hence by \eqref{Eq:ell}, the lowest $L(0)$-weight of any irreducible $g^r$-twisted $V_\Lambda$-module is $1$.
\end{remark}

\subsection{$\Z_7$-orbifold construction} 
In this subsection, we discuss the $\Z_7$-orbifold construction from $V_\Lambda$ and $g$. First we consider the following $V_\Lambda^g$-module:
\[
\begin{split}
\tilde{V}_{\Lambda,g}= & V_\Lambda^g\oplus (V_\Lambda[\tau]^{(f)})_\Z\oplus (V_\Lambda[\tau^{2}]^{(2f)})_{\Z}
\oplus (V_\Lambda[\tau^{3}]^{(3f)})_{\Z} \\
& \oplus (V_\Lambda[\tau^{-3}]^{(-3f)})_{\Z}
\oplus (V_\Lambda[\tau^{-2}]^{(-2f)})_{\Z}\oplus (V_\Lambda[\tau^{-1}]^{(-f)})_{\Z},
\end{split}
\]
where $(V_\Lambda[\tau^r]^{(rf)})_\Z$ is the subspace of $V_\Lambda[\tau^r]^{(rf)}$ with integral weight.
By Remark \ref{Rem:pos}, we can apply \cite[Theorem 5.15]{EMS} to our case under the assumption that $V_\Lambda^g$ is regular (see also \cite{CM,Mi4,Mi}).

\begin{proposition}[cf. {\cite[Theorem 5.15]{EMS}}]
The space $\tilde{V}_{\Lambda,g}$ defined as above is a strongly regular, holomorphic VOA of central charge $24$.
\end{proposition}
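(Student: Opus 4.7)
The plan is to apply \cite[Theorem 5.15]{EMS} directly to the pair $(V_\Lambda, g)$. That theorem, granted its hypotheses, asserts that the direct sum of integer-weight subspaces of the $V_\Lambda^{\langle g\rangle}$-modules $V_\Lambda$ and $V_\Lambda[g^r]^{(rf)}$ (for $r=\pm 1,\pm 2,\pm 3$) carries a strongly regular holomorphic VOA structure extending that of $V_\Lambda^g$. Verifying those hypotheses in the present setting is essentially the whole content of the proof.

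First I would confirm that $g$ has order exactly $7$. Since $\tau(f)=f$ the factors $\sigma_f$ and $\tau$ commute. The isometry $\tau$ has order $7$ by construction, and $\sigma_f$ has order $7$ because $f\in\frac{1}{7}\Lambda\setminus\Lambda$; hence $g^7=\sigma_{7f}\cdot\tau^7=1$ (using $7f\in\Lambda$), while $g\neq 1$. Second, the regularity of $V_\Lambda^g$ is the explicit standing assumption of the proposition (and is known in broad generality via \cite{CM,Mi4,Mi}). Third, and most importantly, the conformal-weight positivity requirement: by Remark~\ref{Rem:pos}, for every $r\in\{\pm 1,\pm 2,\pm 3\}$ the lowest $L(0)$-weight of the unique irreducible $g^r$-twisted $V_\Lambda$-module equals $1$, which is both strictly positive and an integer.

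Beyond these, \cite{EMS} require a type/divisibility condition on conformal weights so that the integer-graded summation makes sense as an extension. Here it is automatic: the weight formula \eqref{Eq:ell} places all $L(0)$-weights of $V_\Lambda[\tau^r]^{(rf)}$ in $\frac{1}{7}\mathbb{Z}_{\geq 0}$, and since $V_\Lambda$ is holomorphic the uniqueness of the $g^r$-twisted module (together with Proposition~\ref{Prop:twist}) guarantees that each $V_\Lambda[\tau^r]^{(rf)}$ is a simple current for $V_\Lambda^g$ whose order in the fusion group is exactly the order of $g^r$ in $\langle g\rangle$. These two inputs supply the ``type $0$'' condition required by \cite[Theorem 5.15]{EMS}. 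The theorem then concludes that $\tilde V_{\Lambda,g}$ is strongly regular and holomorphic; the central charge is $24$, inherited from $V_\Lambda$.

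The one genuinely substantive step is the regularity of $V_\Lambda^g$, which is precisely why it is flagged as an assumption in the statement; once it is granted, the remaining verification is bookkeeping resting almost entirely on Remark~\ref{Rem:pos} and formula \eqref{Eq:ell}. If pressed, the regularity can be extracted from \cite{CM,Mi,Mi4} after observing that $g$ is a finite-order automorphism of a strongly regular holomorphic VOA.
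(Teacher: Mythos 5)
Your proposal matches the paper's approach exactly: the paper justifies this proposition in a single sentence by invoking \cite[Theorem 5.15]{EMS}, with the conformal-weight condition supplied by Remark~\ref{Rem:pos} and the regularity of $V_\Lambda^g$ taken as an assumption (referring to \cite{CM,Mi4,Mi}). Your additional verifications (order of $g$, the type-$0$ condition via integrality of the lowest twisted weights) are correct elaborations of the same argument rather than a different route.
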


\begin{proposition}
Let  $\tilde{V}_{\Lambda,g}$ be defined as above. Then $\dim(\tilde{V}_{\Lambda,g})_1= 48$ and the Lie algebra $(\tilde{V}_{\Lambda,g})_1$ has type $A_{6,7}$.
\end{proposition}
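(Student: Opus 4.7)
The plan is to identify $(\tilde V_{\Lambda,g})_1$ summand by summand, compute its dimension, pin down a Cartan subalgebra and the induced root system, and then invoke Proposition~\ref{Prop:V1} to fix the level.

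For the dimension, the direct sum definition of $\tilde V_{\Lambda,g}$ yields
\[
(\tilde V_{\Lambda,g})_1 = (V_\Lambda^g)_1 \oplus \bigoplus_{r=\pm 1,\pm 2,\pm 3} \bigl(V_\Lambda[\tau^r]^{(rf)}\bigr)_1,
\]
since $1\in\Z$. The Leech lattice has no vector of squared norm $2$, so $(V_\Lambda)_1=\mathfrak{h}$. The Heisenberg relations give $f_{(0)}\bigl(h_{(-1)}\1\bigr)=0$ for every $h\in\mathfrak{h}$, so $\sigma_f$ acts trivially on $\mathfrak{h}$; hence $g$ acts on $(V_\Lambda)_1$ as $\tau$ and $(V_\Lambda^g)_1=\mathfrak{h}_{(0)}$, of dimension $6$. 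By Lemma~\ref{Lem:twist}, each twisted summand $\bigl(V_\Lambda[\tau^r]^{(rf)}\bigr)_1$ has dimension $7$, so $\dim(\tilde V_{\Lambda,g})_1 = 6 + 6\cdot 7 = 48$.

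Next I will determine how $\mathfrak{h}_{(0)}$ acts on $(\tilde V_{\Lambda,g})_1$. Combining \eqref{Eq:V1h} with the standard action $h_{(0)}(e^a)=(h|a)e^a$ on $\C[P_0(\Lambda)]$ yields
\[
h^{(rf)}_{(0)}(e^a\otimes t_r) = (h\,|\,a+rf)(e^a\otimes t_r),\qquad h\in\mathfrak{h}_{(0)},\ a\in P_0(\Lambda),
\]
so $\mathfrak{h}_{(0)}$ acts diagonally on the weight one subspace with weights ranging over $\bigcup_r\mathcal{S}^r$. By Lemma~\ref{Lem:S} this is the set of $42$ nonzero vectors $\pm\beta_i$, $\pm(\beta_i+\beta_{i+1})$, $\pm(\beta_i+\beta_{i+1}+\beta_{i+2})$ for $i\in\Z/7\Z$, and by Remark~\ref{Rem:A6} they all have squared length $2/7$. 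The zero weight subspace is exactly $\mathfrak{h}_{(0)}$, so $\mathfrak{h}_{(0)}$ is self-centralizing, hence a Cartan subalgebra, and the rank equals $6$. In particular $(\tilde V_{\Lambda,g})_1$ is not abelian, so Proposition~\ref{Prop:V1} forces it to be semisimple with $h^\vee/k=(48-24)/24=1$ on every simple ideal. The $42$ roots share one length, so the root system is simply laced, and a rank $6$ simply laced root system with $42$ roots must be $A_6$. Therefore $(\tilde V_{\Lambda,g})_1$ is simple of type $A_6$ at level $k=h^\vee(A_6)=7$, i.e.\ of type $A_{6,7}$.

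The main obstacle will be the middle step: one must verify that no additional weight zero vectors appear in the twisted sectors (so $\mathfrak{h}_{(0)}$ is genuinely self-centralizing) and that the Gram data of Remark~\ref{Rem:A6} really assemble the $42$ weights into the $A_6$ root system. The latter rests on the identity $\sum_{i=0}^{6}\beta_i=0$, which follows by expanding $\bigl|\sum_i\beta_i\bigr|^2 = 7\cdot(2/7) + 2\cdot 7\cdot(-1/7) = 0$ using Remark~\ref{Rem:A6}; this relation forces the $k=4,5,6$ consecutive sums to coincide up to sign with the $k=3,2,1$ ones, so the $42$ vectors are precisely the roots of $A_6$ relative to the simple system $\beta_1,\dots,\beta_6$.
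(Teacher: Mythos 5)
Your proof is correct and follows essentially the same route as the paper's: the same summand-by-summand dimension count $6+6\cdot 7=48$, the same identification of $\mathfrak{h}_{(0)}$ as a self-centralizing Cartan subalgebra via the eigenvalue $(h\,|\,a+rf)$ on the twisted sectors, and the same appeal to Proposition~\ref{Prop:V1} for semisimplicity and the level. One caveat: your intermediate shortcut that ``a rank $6$ simply laced root system with $42$ roots must be $A_6$'' is false as stated ($D_5\oplus A_1$ also is simply laced of rank $6$ with $42$ roots, and $D_{5,8}A_{1,2}$ would even satisfy $h^\vee/k=1$), but your final paragraph repairs this by using $\sum_{i=0}^{6}\beta_i=0$ to show the $42$ weights are precisely the roots of $A_6$ relative to $\{\beta_1,\dots,\beta_6\}$ --- which is also how the paper ultimately justifies the type, since its own ``only possibility for a $48$-dimensional rank $6$ semisimple Lie algebra'' count would likewise admit $D_5\oplus A_1$ and is only settled by the explicit simple-root remark.
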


\begin{proof}
Recall that $(V_\Lambda)_1$ is an abelian Lie algebra of dimension $24$. 
Viewing $\mathfrak{h}_{(0)}$ as a subspace of $(V_\Lambda)_1$, we know that 
$(V_\Lambda^g)_1=\mathfrak{h}_{(0)}$ is an abelian Lie algebra of dimension $6$.
By Lemma \ref{Lem:twist}, we have $\dim(\tilde{V}_{\Lambda,g})_1= 6+7\times 6=48$. 
By Proposition \ref{Prop:V1}, the Lie algebra $(\tilde{V}_{\Lambda,g})_1$ is semisimple.

Now let $x \in\mathfrak{h}_{(0)}\subset (V_\Lambda^g)_1$. Then
$$x_{(0)}^{( rf)}=x_{(0)}+(x| rf)id$$
on $V_\Lambda[\tau^{ r}]^{(rf)}$. Notice that $x_{(0)}=0$ on $M(1)[\tau^r]$ and on $T_{ r}$ by the explicit description of vertex operators in \cite{Le,DL} (cf.\ \cite{SS}).
Hence for $w\otimes e^{a} \otimes t_{ r}\in (V_\Lambda[\tau^{ r}]^{( rf)})_1$, 
\begin{equation}
x_{(0)}^{( rf)}(w\otimes e^{a} \otimes t_{ r})
= (x| a+  rf ) w\otimes e^{a} \otimes t_{ r}, \label{Eq:actx}
\end{equation}
which shows that $x$ is semisimple in $(\tilde{V}_{\Lambda,g})_1$.
Since $x,a+rf\in\mathfrak{h}_{(0)}$ and $(\cdot|\cdot)$ is non-degenerate on $\mathfrak{h}_{(0)}$, the equation \eqref{Eq:actx} also implies that the centralizer of $\mathfrak{h}_{(0)}$ in $(\tilde{V}_{\Lambda,g})_1$ is again $\mathfrak{h}_{(0)}$. 
Hence $\mathfrak{h}_{(0)}$ is a Cartan subalgebra of $(\tilde{V}_{\Lambda,g})_1$.
Thus $(\tilde{V}_{\Lambda,g})_1$ is a $48$-dimensional semisimple Lie algebra with Lie rank $6$, and the only possibility of its type is $A_6$.
We remark that, up to a scaling, $\{\beta_1, \beta_2, \beta_3, \beta_4, \beta_5, \beta_6\}$ forms a set of simple roots for a root system of type $A_6$ (see Remark \ref{Rem:A6}).

By Lemma \ref{Prop:V1}, we have the ratio $h^\vee/k=1$ and hence the level $k$ is  $7$.
Therefore the Lie algebra $(\tilde{V}_{\Lambda,g})_1$ has type $A_{6,7}$.
\end{proof}

\paragraph{\bf Acknowledgement.} The authors wish to thank the referee for helpful comments.

\end{document}